\newcommand{\cC}{{\mathcal C}}
\newcommand{\cB}{\mathcal{B}}
\newcommand{\Rep}{\operatorname{Rep}}
\newcommand{\NN}[2]{ N^{ #1 }_{#2,\overline{#2}} }
\theoremstyle{plain}
\numberwithin{equation}{section}
\newtheorem{theorem}{Theorem}[section]
\newtheorem{lemma}[theorem]{Lemma}
\newtheorem{corollary}[theorem]{Corollary}
\theoremstyle{definition}
\newtheorem{definition}[theorem]{Definition}
\theoremstyle{remark}
\theoremstyle{remark}
\newcounter{commentcounter}
\newcounter{todocounter}
\author[Galindo]{C\'esar Galindo}
\address{ Departamento de Matem\'aticas, Universidad de los Andes, Bogot\'a, Colombia}
\email{cn.galindo1116@uniandes.edu.co}
\author[Rowell]{Eric Rowell}
\address{Department of Mathematics, Texas A\&M University, College Station, TX}
\email{rowell@math.tamu.edu}
\author[Wang]{Zhenghan Wang}
\address{Microsoft Research Station Q, and Department of Mathematics, University of California, Santa Barbara, CA}
\email{zhenghwa@microsoft.com}
\begin{document}

\title[Acyclic anyon model]{Acyclic anyon models, thermal anyon error corrections, and braiding universality}

\thanks{ C.G. was partially supported by Fondo de Investigaciones de la Facultad de Ciencias de
la Universidad de los Andes, Convocatoria 2018-2019 para la Financiación de Programas de Investigación, programa “SIMETR\'{I}A $T$ (INVERSION TEMPORAL) EN
CATEGOR\'{I}AS DE FUSI\'{O}N Y MODULARES”,  E.R. was partially funded by NSF grant DMS-1664359, and Z.W. was partially funded by NSF grant DMS-1411212 and FRG-1664351.}
\begin{abstract}
Acyclic anyon models are non-abelian anyon models for which thermal anyon errors can be corrected.  In this note, we characterize acyclic anyon models and raise the question if the restriction to acyclic anyon models is a deficiency of the current protocol or could it be intrinsically related to the computational power of non-abelian anyons.  We also obtain general results on acyclic anyon models and find new acyclic anyon models such as $SO(8)_2$ and untwisted Dijkgraaf-Witten theories of nilpotent finite groups.
\end{abstract}

\subjclass[2000]{16W30, 18D10, 19D23}

\date{\today}
\maketitle

\section{Introduction}

In topological quantum computing (TQC), information is encoded in the ground state manifolds of topological phases of matter which are error correction codes.  Therefore, TQC is intrinsically fault-tolerant against local errors.  But at any finite temperature $T>0$, thermal anyon pairs created from the vacuum due to thermal fluctuations can diffuse and braid with computational anyons to cause errors, the so-called thermal anyon errors.  In practice, thermal anyon creations are suppressed by the energy gap $\Delta$ and low temperature $T$ as $~\alpha e^{-\frac{\Delta}{T}}$ for some positive constant $\alpha$, so it might not pose a serious challenge.  But if the suppression by gap and temperature is not enough, then thermal anyon errors could become a serious issue for long quantum computation.  In \cite{acyclic}, the authors found an error correction scheme for acyclic anyon models (called non-cyclic in \cite{acyclic}).  In this paper, we characterize acyclic anyon models as anyon models with nilpotent fusion rules.  We obtain several general results on acyclic anyon models and find many more acyclic anyon models such as $SO(8)_2$, which has Property $F$.

Our characterization of acyclic anyon models raise the question if the restriction to acyclic anyon models is a deficiency in the current protocol or could it be intrinsically related to the computational power of non-abelian anyons.  A triality exists for the computational power of non-abelian anyons as illustrated by the anyon models $SU(2)_k, k=2,3,4$.  The type of anyons in $SU(2)_k$ is labeled by the truncated angular momenta in $\{0,1/2,\ldots,k/2\}$ and let $s$ be the spin=$1/2$ anyon.  When $k=2$, $s$ is essentially the Ising anyon $\sigma$, not only it is not braiding universal, but also all braiding circuits can be efficiently simulated by a Turing machine.  Moreover, it is believed that all measurements of total charges can also be efficiently simulated classically.  When $k=3$, $s$ is the Fibonacci anyon, which is braiding universal \cite{flw02}.  When $k=4$, $s$ is a metaplectic anyon which is not braiding universal.  But supplemented by a total charge measurement, a universal quantum computing model can be designed based on the metaplectic anyon $s$ \cite{cui15}.  While $SU(2)_2$ is acyclic, neither $SU(2)_3$ nor $SU(2)_4$ is.  Since acylic anyon models are weakly integral (proved below), they should not be braiding universal as the property $F$ conjecture suggests \cite{NaiduRowell}.  Therefore, it would be interesting to know if any acylic model can be made universal when supplemented with total charge measurements.  If not, then whether or not the protocols in \cite{acyclic} can be generalized to go beyond acyclic anyon models.  

\section{Preliminaries}

An anyon model is mathematically a unitary modular tensor category---a very difficult and complicated structure \cite{bull17}.  But the fusion rule of an anyon model is completely elementary.  Our main result is a theorem about fusion rules, so we start with the basics of fusion rules to make the characterization of acyclic anyon model self-contained. 

\subsection{Fusion Rules}

A \emph{fusion rule (A,N)} based on the finite set $A$ is a collection of non-negative integers $\{N_{ij}^k\}$ as below, where the elements of $A$ will be called anyon types or particle types or topological charges. The elements in $A$ will be denoted by $x_1,x_2,x_3,\ldots$.  A fusion rule is really the pair $(A,N)$, but in the following we sometimes simply refer to the set $A$ or the set of integers $\{N_{ij}^k\}$ as the fusion rule when no confusion would arise.

For every particle type $x_i$ there exists a unique dual or anti-particle type, that we denote by $\overline{x}_i=x_{\overline{i}}$. There is a trivial or “vacuum” particle type denoted by $1$.

The \textit{fusion rules} can be conveniently organized into formal fusion product and sum of particle types (mathematically such formal product and sum can be made into operations of a fusion algebra where particle types are bases elements of the fusion algebra):

\begin{equation*}
    x_i x_j= \sum_{k}N_{i,j}^{k}x_k
\end{equation*}
%\ercom{Maybe just $N_{ij}^k$?} \cgcom{I did the change and the appropriated change in section 3}
where $N_{i,j}^{k}\in \mathbb{Z}^{\geq 0}$. The fusion rules obey the following relations

\begin{enumerate}[label=(\alph*)]
    \item Associativity: $(x_i x_j) x_k= x_i (x_j x_k)$,
%    \item commutativity  $a\times b=b\times a$,
    \item The vacuum is the identity for the fusion product, \[x_i 1=x_i =1x_i,\]
    \item The anti-particle type $x_i\mapsto \overline{x_i}=x_{\overline{i}}$ defines an involution, that is, \[\overline{1}=1,\  \ \overline{x_{\overline{i}}}=x_i,\  \ x_{\overline{j}} x_{\overline{i}}= \overline{x_ix_j},\] where \[\overline{x_ix_j}:= \sum_k N_{i,j}^{k} x_{\overline{k}},\]
    \item The fusion of $x_i$ with its antiparticle $x_{\overline{i}}$ contains the vacuum with multiplicity one, that is $$N_{i,\overline{i}}^1=1.$$

\end{enumerate}
%\ercom{Could we maybe use $\overline{i}$ to denote the label of $x_{i}^*$?} \cgcom{I did the change}

A fusion rule is called \emph{abelian} (or pointed) if $$\sum_k N_{i,j}^{k}=1$$ for every $x_i$ and $x_j$.  If $A$ is an abelian fusion rule, then the fusion product defines a group structure on $A$ and conversely every  group defines an abelian fusion rule.

%If we have a fusion algebra on the set $A$ with $n$ particles, we can assign to each particle $a$ the matrix $N_a$ whose entries are exactly $N_{ab}^c$ in the position $(b,c)$. This is an $n\times n$ integer matrix that contains all the information about the fusion rules of $a$. It satisfies the equation $$N_a N_b = \sum_c N_{ab}^c N_c.$$

%By (a set of) fusion rules we will mean a based ring, in the sense of \cite{ostrik1}.

\subsection{Nilpotent fusion rules}
Let $(A,N)$ be a fusion rule on the set $A$.   A \emph{sub-fusion rule} of $(A,N)$ is a subset $B\subset A$ such that 

\begin{enumerate}[label=(\alph*)]
    \item $1 \in B$,
    \item  $x_i\in B$ if and only if $x_{\overline{i}}\in B$,
    \item  if $x_i,x_j\in B$, then $N_{i,j}^{k}>0$ implies $x_k\in B$.
\end{enumerate} The rank of the fusion rule $A$ is $|A|$, the cardinality of the set $A$.

\begin{definition}\cite{GELAKI20081053}
Let $A_{ad}$ be the minimal sub-fusion rule of $A$ with the property that $x_ix_{\overline{i}}$ belongs to $A_{ad}$ for all $x_i \in A$; that is, $A_{ad}$ consists of all particle types of $A$
contained in $x_ix_{\overline{i}}$ for all $x_i \in A$.
\end{definition}

\begin{definition}\cite{GELAKI20081053}
 The \emph{descending central series} of $A$ is the series of sub-fusion rules \[\cdots A^{(n+1)}\subseteq A^{(n)}\subseteq \cdots \subseteq A^{(1)}\subseteq A^{(0)}=A,\] defined recursively as $A^{(n+1)}=A^{(n)}_{ad},$ for all $n\geq 0$.
\end{definition}

\begin{definition}\cite{GELAKI20081053}
A fusion rule is called \emph{nilpotent}, if there exists an $n\in \mathbb{N}$ such that $A^{(n)}$ has rank one. The smallest number $n$ for which this happens is called the \emph{nilpotency class} of $A$.
\end{definition}

%Our characterization of acyclic fusion rules does not depend on the rest of this section.

%\begin{lemma}\label{Prop:Nilpotent equiv} Let $\cC$ be a fusion category and $G$ a finite group acting on $\cC$. If the equivariantization $\cC^G$ is nilpotent then $\cC$ and $G$ are nilpotent.
%\end{lemma}
%\begin{proof}
%Assume that $\cC^G$ is nilpotent. Since $\Rep(G)\subset \cC$, then $G$ is nilpotent. Since the forgetful functor $\cC^G\to \cC$ is monoidal and surjective we have that $\cC$ is nilpotent.

%Conversely, assume that $\cC$ and $G$ are nilpotent.  Clearly, the fusion subcategory $\cC_{ad}$ is stable for any tensor autoequivalence of $\cC$. Then $\cC_{ad}^G$ is a proper fusion subcategory of $\cC^G$. If $(V,f)\in \cC^G$ is a simple object, then $V=x_i^{\oplus n}$ as an object $\cC$, hence $(V,f)\otimes (V,f)^*\in \cC_{ad}^G$. Hence, $(\cC^G)_{ad}\subset (\cC_{ad})^G$. Now, we will use induction on the nilpotency class. If $\cC$ has nilpotency class one, that is, $\cC$ is abelian (pointed), we have that $(\cC^G)_{ad}\subset (\cC_{ad})^G=\Rep(G)$, and since $G$ is nilpotent, $(\cC^G)_{ad}$ is also nilpotent. If $\cC$ has nilpotency class $n$, by induction hypothesis $(\cC_{ad})^G$ is nilpotent and $(\cC^G)_{ad}\subset (\cC_{ad})^G$. Then $\cC^G$ is nilpotent.
%\end{proof}

\section{Acyclic fusion rules are nilpotent}

In this section, we prove our main result.

\subsection{Acyclic fusion rules}

\begin{definition}\cite{acyclic}
A fusion rule $A$ is called \emph{acyclic} if for any value of $n\in \mathbb{N}$ and for any sequence $$(x_{i_1}=x_{i_{n+1}} , x_{i_n},\ldots ,x_{i_3}, x_{i_2}, x_{i_1})$$ 
with $x_{i_1} \neq \mathbf{1}$, we have that $$\prod_{k=1}^{n} \NN{i_k}{i_{k+1}} =0.$$
\end{definition}

\begin{figure}
    \centering
    \includegraphics[width=10cm]{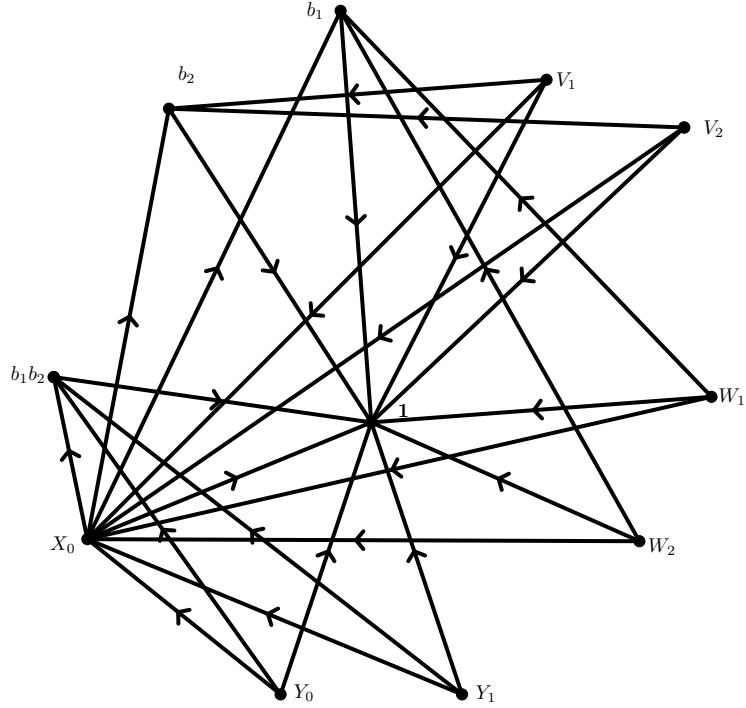}
    \caption{$SO(8)_2$ adjoint graph: $b_1,b_2,b_1b_2$ are bosons and the remaining objects have dimension $2$.  All objects are self-dual. }
    \label{so8level2}
\end{figure}

To any fusion rule we may associate its \emph{adjoint graph} defined as follows (\cite{acyclic}): the vertices are pairs $X_i:=(x_i,x_{\overline{i}})$ and a directed edge is drawn from $X_i\neq (\mathbf{1},\mathbf{1})$ to $X_j$ if $N_{i,\overline{i}}^{j}\neq 0$.  Notice that this is unambiguous since $N_{i,\overline{i}}^{j}=N_{i,\overline{i}}^{\overline{j}}$.  An example is found in Figure \ref{so8level2}.  Now we can alternatively say that a fusion rule is acyclic if its adjoint graph contains no directed cycles.
The adjoint graph found in Figure \ref{so8level2} corresponds to $SO(8)_2$, an integral modular category of dimension $32$ and rank $11$: the explicit fusion rules are found in \cite{BGPR}.  Notice that there are no directed cycles in the adjoint graph of $SO(8)_2$ so its fusion rule is acyclic.  Note also that the direct product of two acyclic fusion rules is acyclic as well.

\begin{lemma}\label{lemma:key-lemma}
Let  $A$ be finite acyclic fusion rule with $|A|>1$. Then the rank of $A_{ad}$ is strictly smaller than the rank of $A$.
\end{lemma}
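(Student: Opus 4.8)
The plan is to argue by contradiction: assume $A$ is finite and acyclic with $|A|>1$, but $A_{ad}$ has the same rank as $A$, i.e.\ $A_{ad}=A$. I want to extract from this equality an infinite walk in the adjoint graph that is forced to revisit a nontrivial vertex, producing a directed cycle and contradicting acyclicity.

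First I would unwind the definition of $A_{ad}$. The condition $A_{ad}=A$ means: every particle type $x_k\in A$ (in particular every nontrivial one) appears in some product $x_i x_{\overline i}$, i.e.\ for every $x_k$ there is an $x_i$ with $N_{i,\overline i}^{k}\neq 0$; said in adjoint-graph language, every vertex $X_k$ has at least one incoming edge from some vertex $X_i\neq(\mathbf 1,\mathbf 1)$. Actually I should be slightly careful: $A_{ad}$ is the \emph{sub-fusion rule generated} by all such $x_k$, so $A_{ad}=A$ only guarantees that $A$ is generated (under fusion, duals, and including $\mathbf 1$) by the set $S=\{x_k : \exists\, x_i,\ N_{i,\overline i}^k\neq 0\}$. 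So the real statement I need is: if $|A|>1$ then $S$ cannot generate $A$ — equivalently, there is some nontrivial $x_\ell$ that is \emph{not} in the sub-fusion rule generated by $S$. The cleanest route: suppose instead $S$ generates $A$. Since $|A|>1$, $S$ is nonempty and contains a nontrivial element. Now build a walk backwards in the adjoint graph. Pick any nontrivial $x_{i_1}$. Because $x_{i_1}$ lies in the sub-fusion rule generated by $S$ — and a short lemma shows any nontrivial element of the generated sub-fusion rule is "reachable" in the adjoint graph from some element of $S$, and each element of $S$ has an incoming adjoint edge — we can find $x_{i_2}\neq\mathbf 1$ with $N_{i_2,\overline{i_2}}^{i_1}\neq 0$, hence a directed edge $X_{i_2}\to X_{i_1}$. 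Repeat: find $x_{i_3}\neq\mathbf 1$ with an edge $X_{i_3}\to X_{i_2}$, and so on, producing an infinite backward walk $\cdots \to X_{i_3}\to X_{i_2}\to X_{i_1}$ through nontrivial vertices. Since $A$ is finite, some vertex repeats, say $X_{i_p}=X_{i_q}$ with $p>q$; the segment of the walk between these two occurrences is a directed cycle of nontrivial vertices, so the corresponding product $\prod_{k=q}^{p-1} N^{i_k}_{i_{k+1},\overline{i_{k+1}}}$ is positive, contradicting that $A$ is acyclic.

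The one technical point I must pin down carefully — and this is the main obstacle — is the step "$x_{i_1}$ in the sub-fusion rule generated by $S$ implies there is an $x_{i_2}\neq\mathbf 1$ in $S$ with $N_{i_2,\overline{i_2}}^{i_1}\neq 0$", or more precisely that from any nontrivial element of the generated sub-fusion rule we can step backward along an adjoint edge to a nontrivial vertex. The generated sub-fusion rule is built up from $S$ by iterated fusion and duals, so an element $x_{i_1}\neq\mathbf 1$ of it is a constituent of some product $x_{a_1}\cdots x_{a_r}$ with each $x_{a_j}\in S\cup\{\mathbf 1\}$ (and not all $x_{a_j}=\mathbf 1$, else $x_{i_1}=\mathbf 1$). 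Then $x_{i_1}$ is a constituent of $x_{a_j}\cdot(\text{rest})$ for some $x_{a_j}\neq\mathbf 1$ in $S$, and since $x_{a_j}\in S$ there is $x_{i_2}\neq\mathbf 1$ with $N_{i_2,\overline{i_2}}^{a_j}\neq 0$. To get an actual adjoint edge ending at $x_{i_1}$ I instead argue directly: it is cleaner to prove by induction on the length of the generating expression that \emph{every} nontrivial element of $\langle S\rangle$ has an incoming adjoint edge from a nontrivial vertex; the base case is $S$ itself, and the inductive step handles fusion $x_by_c$ and duals, using $N_{i,\overline i}^j = N_{i,\overline i}^{\overline j}$ and the associativity of the adjoint construction. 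Once this inductive claim is in hand, the backward-walk/pigeonhole argument above closes the proof. I expect the bookkeeping in that induction to be the only nontrivial part; everything else is finiteness plus the definition of "acyclic."
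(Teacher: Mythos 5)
Your argument is essentially the paper's: assuming $A_{ad}=A$, you walk backwards along adjoint edges through nontrivial vertices and use finiteness of $A$ to force a repeated vertex, hence a directed cycle, contradicting acyclicity (the paper packages the finiteness step as an induction showing the terms of the backward walk are pairwise distinct rather than as a pigeonhole, but it is the same argument). The only point of divergence is your worry that $A_{ad}=A$ merely says the constituents of the products $x_ix_{\overline i}$ \emph{generate} $A$: under the definition the paper adopts from Gelaki--Nikshych, $A_{ad}$ \emph{is} the set of all particle types contained in some $x_ix_{\overline i}$, so $A_{ad}=A$ immediately hands you, for each nontrivial $x_{i_1}$, some $x_{i_2}$ with $N^{i_1}_{i_2,\overline{i_2}}>0$, and $x_{i_2}\neq \mathbf 1$ since $x_{i_2}=\mathbf 1$ would force $x_{i_1}=\mathbf 1$. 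The auxiliary claim you leave as a sketch (every nontrivial element of the generated sub-fusion rule receives an adjoint edge from a nontrivial vertex) is true --- it amounts to the Gelaki--Nikshych fact that those constituents already form a sub-fusion rule --- but it is the one step you do not actually carry out, and it becomes unnecessary once the definition is read as above.
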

%\begin{proof}
%Assume that $R$ is acyclic. Let $\{x_1,\cdots, x_n\}$ be all basic elements different to $1$. If $\NN{1}{i}=0$ for all $i>1$ then $x_1\notin R_{ad}$. In this case, $R_{ad}$ has rank strictly smaller than the rank of $R$. Otherwise, we can assume, reordering if is necessary, that If there is $x_i$ such that $\NN{1}{i}>0$, 
%\end{proof}

\begin{proof}
Assume that $A$ is acyclic and $A_{ad}=A$.

For each $n\in \mathbb{N}$ we will define a sequence of bases elements 
\begin{equation}\label{sequence}
(x_{i_n},\ldots,x_{i_2}, x_{i_1})    
\end{equation}
such that 
\begin{enumerate}[label=(\alph*)]
    \item\label{a} $\NN{i_k}{i_{k+1}}>0$ and $\NN{i_{k+1}}{i_k}=0$ for all $k<n$.
%\item\label{b} the sequence $(a_i,a_{i+1},\ldots,a_j, a_i )$, is such that $\prod_{k=i}^{j+1}m(a_{k+1},a_k)=0$,  for all $1\leq 1<j\leq n$.
\item\label{c} $x_{i_k}\neq 1$ for all $k$.
\end{enumerate}

Since $A$ has rank bigger than one, there is an $x_{i_1}\neq 1$. Using that $A_{ad}=A$, we have that there is $x_{i_2}$ such that $\NN{i_1}{i_2}>0$.  Now, since $A$ is acyclic, using the sequence $(x_{i_1},x_{i_2},x_{i_1})$, we have that $\NN{i_2}{i_1}=0.$ In particular, $\NN{i_2}{i_1}=0$ implies  $x_{i_2}\neq 1$.  Using the same argument, we can construct for each $n\in \mathbb{N}$ a sequence  $(x_{i_n},\ldots, x_{i_1})$  that satisfies \ref{a}  and \ref{c}.

Let us see that the elements in the sequence \eqref{sequence} are pairwise distinct.  For $n=2$,  we have that $\NN{i_1}{i_2}\neq \NN{i_2}{i_1}$, then $x_{i_1}\neq x_{i_2}$.  Assume that any sequence of  $n-1$ elements satisfying \ref{a} and \ref{c} has pairwise distinct elements. Then  $(x_{i_n},\ldots,x_{i_2})$ and  $(x_{i_{n-1}},\ldots, x_{i_1})$ are pairwise distinct. If $x_{i_1}=x_{i_n}$, since $A$ is acyclic, we have that $$\prod_{k=1}^{n}\NN{i_k}{i_{k+1}}=0.$$ But by construction, $\NN{i_k}{i_{k+1}}>0$, hence we have a contradiction. In conclusion, the elements in the sequence  $(x_{i_n},\ldots, x_{i_1})$ are pairwise distinct.

Finally, since the rank of $A$ is a finite number, and we can construct an arbitrary large sequence of pairwise distinct basic elements, we obtain a contradiction. Thus if $A$ is a nontrivial acyclic fusion rules, the rank of $A_{ad}$ is strictly smaller than the rank of $A$. 
\end{proof}

\begin{theorem}\label{Thm:acyclic implies nilpotent}
Let $A$ be a fusion rule. Then $A$ is  acyclic if and only if $A$ is nilpotent.
\end{theorem}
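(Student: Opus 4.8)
The plan is to derive both implications from the behavior of the descending central series, combining Lemma~\ref{lemma:key-lemma} for one direction with a ``level function'' on the adjoint graph for the other. Since a rank-one fusion rule is vacuously acyclic and is nilpotent of class $0$, I may assume $|A|>1$ throughout.

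For \emph{acyclic $\Rightarrow$ nilpotent}, the first step is the elementary remark that any sub-fusion rule $B$ of an acyclic fusion rule is again acyclic: a sequence of elements of $B$ is in particular a sequence of elements of $A$ with $x_{i_1}\ne\mathbf 1$, and the relevant coefficients $N_{i,\overline{i}}^{j}$ are the same whether computed in $B$ or in $A$, so the defining vanishing $\prod_k \NN{i_k}{i_{k+1}}=0$ is inherited. Consequently each $A^{(n+1)}=(A^{(n)})_{ad}$, being a sub-fusion rule of $A^{(n)}$ and hence inductively a sub-fusion rule of $A$, is a finite acyclic fusion rule, so Lemma~\ref{lemma:key-lemma} applies to it whenever $|A^{(n)}|>1$, giving $|A^{(n+1)}|=|(A^{(n)})_{ad}|<|A^{(n)}|$. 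Thus the ranks $|A^{(0)}|>|A^{(1)}|>\cdots$ strictly decrease until they reach $1$, which must occur after finitely many steps; that is exactly the definition of nilpotence.

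For \emph{nilpotent $\Rightarrow$ acyclic}, fix $N$ with $|A^{(N)}|=1$, so $A^{(N)}=\{\mathbf 1\}$, and for $x\in A$ set $\ell(x)$ to be the largest $m$ with $x\in A^{(m)}$; this is well defined because $A^{(m+1)}\subseteq A^{(m)}$, with $\ell(\mathbf 1)=\infty$ and $\ell(x)<N$ for $x\ne\mathbf 1$. The crucial observation is that every edge of the adjoint graph strictly raises $\ell$: an edge $X_i\to X_j$ requires $x_i\ne\mathbf 1$ and $N_{i,\overline{i}}^{j}\ne 0$, i.e. $x_j$ occurs in $x_ix_{\overline i}$; writing $m=\ell(x_i)$ (finite since $x_i\ne\mathbf 1$), the definition of $A^{(m+1)}=(A^{(m)})_{ad}$ forces every particle type occurring in $x_ix_{\overline i}$ into $A^{(m+1)}$, so $\ell(x_j)\ge m+1>\ell(x_i)$. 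Now suppose the adjoint graph had a directed cycle $X_{i_1}\to X_{i_2}\to\cdots\to X_{i_r}\to X_{i_1}$. Every vertex of the cycle is the source of an edge, hence is some $X_{i_t}$ with $x_{i_t}\ne\mathbf 1$ and $\ell(x_{i_t})$ finite; going around the cycle then yields $\ell(x_{i_1})<\ell(x_{i_2})<\cdots<\ell(x_{i_r})<\ell(x_{i_1})$, a contradiction. Hence the adjoint graph of $A$ has no directed cycles, i.e. $A$ is acyclic.

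I do not anticipate a genuine obstacle. The two places that need a little care are minor: (i) confirming that each particle type occurring in some $x_ix_{\overline i}$ genuinely lies in $A_{ad}$ — immediate from the definition of $A_{ad}$ as the minimal sub-fusion rule closed under $x\mapsto x\overline{x}$; and (ii) bookkeeping with the vacuum in the level function, which is resolved by noting that $(\mathbf 1,\mathbf 1)$ emits no edges of the adjoint graph and therefore cannot lie on a directed cycle.
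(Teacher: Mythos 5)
Your proof is correct. The forward implication (acyclic $\Rightarrow$ nilpotent) is exactly the paper's argument: sub-fusion rules inherit acyclicity, so Lemma~\ref{lemma:key-lemma} forces the ranks along the descending central series to drop strictly until they hit one. For the converse you take a genuinely different route. The paper argues by induction on the nilpotency class: the base case is that a class-one (i.e.\ abelian) fusion rule is acyclic, and in the inductive step one observes that a cyclic sequence with all $\NN{i_k}{i_{k+1}}>0$ has every term lying in $A_{ad}$ (each $x_{i_{k+1}}$ occurs in $x_{i_k}x_{\overline{i_k}}$, and the sequence wraps around), contradicting acyclicity of $A_{ad}$. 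You instead make the underlying mechanism explicit as a potential function: $\ell(x)=\max\{m : x\in A^{(m)}\}$ strictly increases along every edge of the adjoint graph emanating from a non-vacuum vertex, because $x_j$ occurring in $x_ix_{\overline{i}}$ with $x_i\in A^{(m)}$ forces $x_j\in (A^{(m)})_{ad}=A^{(m+1)}$. This is the same structural fact the paper iterates, but packaged non-inductively; it buys a cleaner, one-shot contradiction (a strictly increasing function around a cycle) and dispenses with the separate base case. One small point of hygiene: your contradiction is phrased for directed cycles of the adjoint graph, while the definition of acyclicity is stated for closed sequences; the paper asserts the equivalence, and in any case your level argument applies verbatim to an arbitrary closed sequence once you note (as you implicitly do) that $x_{i_1}\neq\mathbf 1$ together with positivity of all $\NN{i_k}{i_{k+1}}$ forces every $x_{i_k}\neq\mathbf 1$. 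No gaps.
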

\begin{proof}
Clearly any sub-fusion rules of acyclic fusion rules is acyclic. 

Assume that $A$ is acyclic. Using Lemma \ref{lemma:key-lemma} we obtain that in the sequence \[\cdots A^{(n+1)}\subseteq A^{(n)}\subseteq \cdots \subseteq A^{(1)}\subseteq A^{(0)}=A,\] the rank of $A^{(n+1)}$ is strictly smaller than the rank of $A^{(n)}$ if the rank of $A^{(n)}$ is bigger than one. Since the rank of $A$ is finite, there is $m\in \mathbb{N}$ such that the rank of  $A^{(m)}$ is one, that is, $R$ is nilpotent.

Assume that $A$ is nilpotent. We will use induction on the nilpotency class. If $A$ has nilpotency class one, then $A$ is abelian (pointed in mathematical terminology) and thus acyclic. Assume that that $A$ is nilpotent with $A_{ad}\neq A$. By induction hypothesis $A_{ad}$ is acyclic. Let $$(x_{i_1}=x_{i_{n+1}},x_{i_n},\ldots ,x_{i_2},x_{i_1})$$ be a sequence of basic elements with $x_{i_1}\neq 1$. If $\NN{i_k}{i_{k+1}}>0$ for all $k$, then $x_{i_k}\in A_{ad}$ for all $k$ and $$\prod_{k=1}^n\NN{i_k}{i_{k+1}}>0,$$ a contradiction since $A_{ad}$ is acyclic.
\end{proof}
\begin{corollary}
\begin{enumerate}
    \item\label{gauging} If a gauging  $\cB_G^{\times, G}$ of a modular category $\cB$ by a finite group $G$ has acyclic fusion rules then $\cB$ has acyclic fusion rules and $G$ is nilpotent.
    \item\label{Dijkgraaf-Witten} The untwisted Dijkgraaf-Witten theory $\mathcal{Z}(\operatorname{Vec}_G)$ has acyclic fusion rules if and only if $G$ is a nilpotent group.
%    \item\label{nilpotent non-integral} Every non integral nilpotent category is a gauging of a  nilpotent group  by a Deligne product of  a weakly anisotropic abelian modular category and an Ising category.  
\end{enumerate}
\end{corollary}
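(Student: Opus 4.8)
The plan is to deduce both parts of the Corollary from Theorem \ref{Thm:acyclic implies nilpotent} together with known facts relating the fusion rules of gaugings and Drinfeld doubles to group-theoretic nilpotency, passing freely between the categorical notion of nilpotence (in the sense of Gelaki--Nikshych) and the combinatorial notion of nilpotent fusion rule introduced above; the key observation is that a fusion category $\cC$ is nilpotent in the categorical sense precisely when its fusion rule $(\Irr(\cC),N)$ is nilpotent in the sense of this section, so Theorem \ref{Thm:acyclic implies nilpotent} lets us replace ``acyclic fusion rules'' by ``(fusion-rule) nilpotent'' throughout.

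For part \eqref{Dijkgraaf-Witten}, I would argue as follows. The untwisted Dijkgraaf--Witten theory $\cZ(\Vec_G)$ is the Drinfeld center of $\Vec_G$, equivalently the category of $G$-graded $G$-equivariant vector spaces, i.e. modules over the Drinfeld double $D(G)$. First I would recall the result of Gelaki--Naidu--Nikshych that $\cZ(\Vec_G)$ is a nilpotent fusion category if and only if $G$ is a nilpotent group (indeed $\cZ(\Vec_G)$ is always integral, and its nilpotency class is controlled by that of $G$). Combined with Theorem \ref{Thm:acyclic implies nilpotent} and the identification of categorical nilpotence with fusion-rule nilpotence, this gives the claimed equivalence. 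If one prefers a self-contained combinatorial argument, one can instead compute $(\cZ(\Vec_G))_{ad}$ directly: the simple objects of $\cZ(\Vec_G)$ are indexed by pairs $(\mathcal{O}, \rho)$ with $\mathcal{O}$ a conjugacy class of $G$ and $\rho$ an irreducible representation of the centralizer of a point in $\mathcal{O}$, and a short calculation shows that the adjoint subcategory is $\cZ(\Vec_{G/Z(G)})$-like in structure, so that the descending central series of $\cZ(\Vec_G)$ terminates exactly when the upper (or lower) central series of $G$ does.

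For part \eqref{gauging}, suppose the gauging $\cB_G^{\times,G}$ (the $G$-equivariantization of a $G$-crossed braided extension $\cB_G^{\times}$ of $\cB$) has acyclic, hence nilpotent, fusion rules. Two things must be extracted. Since $\cB$ is a fusion subcategory of the gauging, and a fusion subcategory of a nilpotent fusion category is nilpotent, $\cB$ has nilpotent fusion rules, hence by Theorem \ref{Thm:acyclic implies nilpotent} acyclic fusion rules. For the group $G$: the gauging contains $\Rep(G) = \Vec_G^{\,*}$ as a fusion subcategory (the ``gauge fields''), equivalently the equivariantization records $G$ via the image of $\Rep(G)$ in the Drinfeld center; since a fusion subcategory of a nilpotent category is nilpotent, $\Rep(G)$ is nilpotent, and by the group-theoretic characterization (again Gelaki--Nikshych: $\Rep(G)$ is nilpotent iff $G$ is nilpotent) we conclude $G$ is nilpotent.

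The main obstacle, and the only point requiring care, is the translation between the two notions of nilpotence and the precise input from the literature: one must check that ``nilpotent fusion rule'' as defined via the descending central series $A^{(n+1)} = A^{(n)}_{ad}$ agrees on the nose with the categorical descending central series $\cC_{ad}$ used by Gelaki--Nikshych (this is immediate from the definitions, since $\Irr(\cC_{ad})$ is exactly $(\Irr\cC)_{ad}$), and that the cited structural facts about $\cZ(\Vec_G)$ and about equivariantizations/gaugings are being quoted correctly. Everything else is then a short assembly: Theorem \ref{Thm:acyclic implies nilpotent} does the real work, converting the hypothesis ``acyclic'' into the workable hypothesis ``nilpotent,'' after which the group theory is standard.
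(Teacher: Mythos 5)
Your treatment of part \eqref{Dijkgraaf-Witten} is correct and essentially identical to the paper's: both reduce the claim, via Theorem \ref{Thm:acyclic implies nilpotent} and the (immediate) identification of fusion-rule nilpotence with categorical nilpotence, to the Gelaki--Nikshych result that the center of $\Rep(G)$ (equivalently of $\Vec_G$, i.e.\ the representation category of the Drinfeld double) is nilpotent if and only if $G$ is nilpotent. Your argument that $G$ is nilpotent in part \eqref{gauging}, via the fusion subcategory $\Rep(G)\subset \cB_G^{\times,G}$, is also the paper's argument.

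However, your deduction that $\cB$ has acyclic fusion rules contains a genuine gap: you assert that $\cB$ is a fusion subcategory of the gauging $\cB_G^{\times,G}$, and in general it is not. The gauging is the equivariantization of a $G$-crossed extension $\cB_G^{\times}=\bigoplus_{g\in G}\cB_g$ with $\cB_e=\cB$; a simple object $X$ of $\cB$ contributes to the equivariantization only through the induced object $\bigoplus_g g(X)$ with its canonical equivariant structure, so the simple objects of $\cB$ need not occur among the simple objects of $\cB_G^{\times,G}$ at all (this already fails whenever the $G$-action permutes simples of $\cB$ nontrivially). The correct route, which is the one the paper takes, is the chain $\Rep(G)\subset \cB^G\subset \cB_G^{\times,G}$, where $\cB^G$ is the equivariantization of the trivial component: being a fusion subcategory of a nilpotent category, $\cB^G$ is nilpotent, and then one passes from $\cB^G$ to $\cB$ using that the forgetful functor $\cB^G\to\cB$ is a \emph{surjective} tensor functor together with the fact that nilpotency is inherited by surjective images (Gelaki--Nikshych, Proposition 4.6). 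Your proof needs this extra step; subcategory closure alone does not reach $\cB$.
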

\begin{proof}
\eqref{gauging} If $\cB_G^{\times, G}$ is nilpotent any fusion subcategory is also nilpotent. Since $\Rep(G)\subset \cB^G\subset \cB_G^{\times, G}$, we have that $\Rep(G)$ and $\cB^G$ are nilpotent. The forgetful functor $\cB^G\to \cB$ is suryective, then by \cite[Proposition 4.6]{GELAKI20081053}

%Conversely, assume that $G$ and $\cB$ are nipotent. The modular category $\cB_G^{\times, G}$ is the equivariantization of a faithfully braided $G$-crossed braided fusion  category $\cB_G^{\times}$, where $(\cB_G^{\times})_e=\cB$. Since $(\cB_G^{\times})_{ad}\subset \cB_{ad}$, we have that $\cB_G^{\times}$ is nilpotent. Now using Proposition \ref{Prop:Nilpotent equiv} we obtain that  $\cB_G^{\times, G}$ is nilpotent.

\eqref{Dijkgraaf-Witten} An untwisted Dijkgraaf-Witten theory of a finite group $G$ is exactly $\mathcal{Z}(\Rep(G))$ the Drinfeld center of $\Rep(G)$. Thus, by \cite[Theorem 6.11]{GELAKI20081053} $\mathcal{Z}(\Rep(G))$ is nilpotent if and only if $G$ is nilpotent.

%\eqref{nilpotent non-integral} It follows from the first part of this corollary and \cite[Theorem 1.1]{2017sonia}.
\end{proof}
%\textbf{Notation:} Given basic elements $x_i, x_j$, we will denote by $m(x_j,x_i)$ the non-negative integer $N_{x_j,x_{\overline{j}}}^{x_i}.$

A braided fusion category $\cC$ has property $F$ \cite{NaiduRowell} if, for every simple object $X$, the braid group representation associated with $X$ has finite image.  Conjecturally, the class of braided fusion categories with property $F$ coincides with the class of braided weakly integral fusion categories.  It follows from \cite{NaiduRowell} that the acyclic braided fusion category $SO(8)_2$ mentioned above has property $F$.  Although we do not know if all acyclic braided fusion categories have property $F$, some partial results in this direction are as follows.

\begin{corollary}
\begin{enumerate}
\item A fusion category with acyclic fusion rules is weakly group-theoretical. In particular it is weakly integral.
\item An integral braided fusion category with acyclic fusion rules is group-theo\-retical and hence property $F$.
\item A modular tensor category $\cB$ has acyclic fusion rules if and only if $\cB$ is the Deligne product of modular categories of prime powers.
    
\end{enumerate}
\end{corollary}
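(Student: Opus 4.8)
The plan is to reduce each of the three assertions to Theorem~\ref{Thm:acyclic implies nilpotent} together with known structural theorems for nilpotent (braided) fusion categories, so that essentially no new argument is needed. First I would record the translation: for a fusion category $\cB$, the adjoint subcategory $\cB_{ad}$ is the fusion subcategory generated by the objects $X\otimes X^{*}$, so its Grothendieck fusion rule is $(K_{0}\cB)_{ad}$; iterating, $\cB$ has acyclic fusion rules $\iff$ $K_{0}\cB$ is nilpotent (Theorem~\ref{Thm:acyclic implies nilpotent}) $\iff$ $\cB$ is a nilpotent fusion category in the sense of \cite{GELAKI20081053}. For part~(1): a nilpotent fusion category is weakly group-theoretical, being (trivially) categorically Morita equivalent to a nilpotent fusion category, which is the definition; and the Frobenius--Perron dimension of a nilpotent fusion category is a positive integer \cite{GELAKI20081053} (equivalently, weakly group-theoretical categories are weakly integral \cite{ENO}), so $\cB$ is weakly integral.

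For part~(2), $\cB$ is moreover braided and integral, hence an integral, braided, nilpotent fusion category, and I would invoke the structure theory of nilpotent braided fusion categories of Drinfeld--Gelaki--Nikshych--Ostrik \cite{DGNO}: such a $\cB$ is group-theoretical. (Concretely: $\cB$ decomposes as a Deligne product $\boxtimes_{p}\cB_{(p)}$ over the primes $p$ dividing $\FPdim\cB$, with $\FPdim\cB_{(p)}$ a power of $p$; as $\cB$ is integral, each factor $\cB_{(p)}$ is an integral braided fusion category of prime-power dimension, hence group-theoretical, and a Deligne product of group-theoretical categories is group-theoretical.) Finally, group-theoretical braided fusion categories have property $F$ by \cite{NaiduRowell}, which gives the last clause.

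For part~(3): if $\cB\simeq\boxtimes_{i}\cB_{i}$ with each $\cB_{i}$ a modular category of prime-power Frobenius--Perron dimension, then each $\cB_{i}$ is nilpotent \cite{DGNO} and hence has acyclic fusion rules by Theorem~\ref{Thm:acyclic implies nilpotent}; since a direct product of acyclic fusion rules is again acyclic, $\cB$ has acyclic fusion rules. Conversely, if the modular category $\cB$ has acyclic fusion rules then $\cB$ is nilpotent, and the structure theorem for nilpotent modular categories \cite{DGNO} yields $\cB\simeq\boxtimes_{p}\cB_{(p)}$ with $\FPdim\cB_{(p)}$ a power of $p$, each $\cB_{(p)}$ being modular because the M\"uger center of a Deligne product is the Deligne product of the M\"uger centers while $\cB$ is nondegenerate. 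All the real content sits in the imported results --- that integral braided fusion categories of prime-power dimension are group-theoretical, and that nilpotent (modular) braided fusion categories split as Deligne products of $p$-primary pieces; the only genuine obstacle is matching these assertions to the correct statements in \cite{GELAKI20081053,DGNO,ENO} rather than reproving them.
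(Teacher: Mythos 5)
Your proposal is correct and takes essentially the same route as the paper, which simply notes that after Theorem~\ref{Thm:acyclic implies nilpotent} identifies acyclic with nilpotent, all three statements are the known structural results of \cite{DGNO1,ERW,ENO2} for nilpotent (braided/modular) fusion categories; your write-up just makes the imported statements explicit. The only discrepancy is bibliographic: the property~$F$ claim for group-theoretical braided categories is the content of \cite{ERW} (with \cite{NaiduRowell} as the later formulation), and the prime-power decomposition and group-theoreticity of integral nilpotent braided categories are in \cite{DGNO1}, so you should match your citations to those sources.
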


\begin{proof}
In light of Theorem \ref{Thm:acyclic implies nilpotent}, the statements follow from the results of \cite{DGNO1,ERW,ENO2}.
\end{proof}
This strongly suggests that anyon models with acyclic fusion rules are never braiding universal alone.

\end{document}